\newtheorem{theorem}{Theorem}[section]
\newtheorem{proposition}[theorem]{Proposition}
\newtheorem{corollary}[theorem]{Corollary}
\newtheorem{remark}[theorem]{Remark}
\title{Reflection length in non-affine Coxeter groups}
\author{Kamil Duszenko}
\address{Instytut Matematyczny, Uniwersytet Wroc{\l}awski,
plac Grunwaldzki 2/4, 50-384 Wroc{\l}aw, Poland}
\email{Kamil.Duszenko@math.uni.wroc.pl}
\subjclass[2000]{20F55, 20F67}
\thanks{The author was partially supported by a MNiSW grant N N201 541738.}
\begin{document}

\begin{abstract}
The reflection length of an element of a Coxeter group is the minimal number
of conjugates of the standard generators whose product is equal to that element.
In this paper we prove the conjecture of McCammond and Petersen that reflection
length is unbounded in any non-affine Coxeter group. Among the tools used,
the construction of word-hyperbolic quotients of all minimal non-affine
Coxeter groups might be of independent interest.
\end{abstract}

\maketitle

\section{Introduction}

Let $(W,S)$ be a Coxeter group and let $R=\{wsw^{-1}:s\in S,w\in W\}$
be the set of reflections. The reflection length of an element $w\in W$
is defined as
\begin{equation}
\label{refllengthdef}
||w||_R=\min\{n:w=r_1r_2\ldots r_n\mbox{ for some }r_i\in R\}.
\end{equation}

It is easy to see that reflection length in an affine Coxeter group
is a bounded function. In fact, McCammond and Petersen \cite{McCamm}
proved that for an affine group $W$ acting on the Euclidean space ${\mathbb E}^n$
the maximal value of reflection length is equal to $2n$. They also observed
that reflection length in a free Coxeter group is unbounded and asked
if this is the case for all non-affine Coxeter groups.
The purpose of the present paper is to answer this question
in the positive:

\begin{theorem}
\label{maintheorem}
For any non-affine Coxeter group $(W,S)$ the reflection length
is an unbounded function on $W$.
\end{theorem}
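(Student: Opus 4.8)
The plan is to combine a reduction to a short list of Coxeter groups with a quasimorphism argument applied to suitable word-hyperbolic quotients. First I would reduce to the case that $(W,S)$ is \emph{minimal non-affine}: not affine, but with every proper standard parabolic subgroup $W_T$ ($T\subsetneq S$) finite or affine. Such a $T$ is obtained by taking a subset of $S$ minimal with respect to the property that $W_T$ is non-affine. Here one invokes two standard facts about reflections and parabolics: the reflections of $W$ lying in $W_T$ are exactly the reflections of the Coxeter system $(W_T,T)$, and the reflection length of an element of $W_T$ is the same whether computed in $W_T$ or in $W$. Consequently it suffices to prove the theorem for minimal non-affine Coxeter groups.

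Next I would isolate the following soft criterion. If a group $G$ is generated by a set $\Sigma$ of torsion elements and $G$ carries an unbounded homogeneous quasimorphism $q\colon G\to\mathbb{R}$, then the word length $\|\cdot\|_{\Sigma}$ is unbounded on $G$: since $q$ vanishes on every torsion element and has bounded defect $D(q)$, one has $|q(\sigma_1\cdots\sigma_n)|\le (n-1)D(q)$ whenever $\sigma_i\in\Sigma$, while $q$ takes arbitrarily large values. Applying this to a surjection $\varphi\colon W\twoheadrightarrow\overline{W}$ with $\Sigma=\varphi(R)$ — a generating set of $\overline{W}$ consisting of involutions, because $S\subseteq R$ — and using $\|\varphi(w)\|_{\Sigma}\le\|w\|_{R}$, we see that it is enough to attach to each minimal non-affine Coxeter group a quotient $\overline{W}$ admitting an unbounded homogeneous quasimorphism. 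When $W$ itself is non-elementary word-hyperbolic — as for the hyperbolic triangle groups, the Lann\'er simplex groups, and more — one simply takes $\overline{W}=W$ and invokes the theorem of Epstein and Fujiwara that non-elementary word-hyperbolic groups have an infinite-dimensional space of homogeneous quasimorphisms (a minimal non-affine Coxeter group has rank at least three, so it is never virtually cyclic).

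The genuine work, and the main obstacle, is the case where $W$ is minimal non-affine but \emph{not} word-hyperbolic; by Moussong's criterion this happens exactly when $W$ contains an affine parabolic of rank at least three or a pair of commuting infinite parabolics, both of which do occur (for instance the former among the finite-volume hyperbolic simplex groups). For each such $W$ one must produce an explicit surjection onto an infinite, non-elementary word-hyperbolic group, the natural strategy being to impose a small number of extra relations collapsing the offending affine or reducible piece while keeping the quotient large; one then has to certify that the quotient is non-elementary and hyperbolic, via Moussong's criterion when it is again a Coxeter group and otherwise by small-cancellation or acylindricity arguments. Carrying this out uniformly over the classification of minimal non-affine Coxeter groups is the technical heart of the proof; by contrast, the parabolic reduction and the quasimorphism criterion are routine.
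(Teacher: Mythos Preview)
Your overall architecture matches the paper's: reduce to minimal non-affine via Dyer's result on reflection length in parabolics, produce a non-elementary word-hyperbolic quotient, and then invoke Epstein--Fujiwara quasimorphisms to force any conjugation-invariant word length to be unbounded (the paper phrases this last step via bi-invariant word metrics rather than your ``$q$ vanishes on torsion'' computation, but the two are equivalent). The only substantive divergence is in how the hyperbolic quotient is built. Where you propose to add relations collapsing the offending affine parabolics and then certify hyperbolicity case-by-case via Moussong or small cancellation, the paper gives a single uniform geometric construction: a minimal non-affine $W$ acts on $\mathbb{H}^n$ ($n=|S|-1$) with simplicial fundamental domain whose ideal vertices correspond exactly to the Euclidean maximal parabolics; one passes to a sufficiently deep torsion-free normal finite-index subgroup $\overline{W}$, performs the Mosher--Sageev negatively curved cone-off of the cusps of $\mathbb{H}^n/\overline{W}$ \emph{equivariantly} with respect to the residual $W/\overline{W}$-action, and then lifts the virtual surjection $\overline{W}\twoheadrightarrow\pi_1(N)$ to an honest surjection of $W$ onto a word-hyperbolic $H$. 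This buys uniformity (no appeal to any classification) and a transparent reason for hyperbolicity (the filled space is genuinely negatively curved). Your route could in principle be made to work, but as you yourself note, the case analysis you gesture at \emph{is} the entire content; the paper's equivariant filling sidesteps it completely.
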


The proof of Theorem \ref{maintheorem} is divided into two parts.
In the first part we exhibit hyperbolic behavior
of \emph{minimal non-affine} Coxeter groups.
Specifically, we prove the following result:

\begin{theorem}
\label{hyperbolicquotient}
Every minimal non-affine Coxeter group admits a surjection onto
a non-elementary word-hyperbolic group.
\end{theorem}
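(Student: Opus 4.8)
The plan is to prove that every minimal non-affine Coxeter group is a finite-covolume reflection group on some real hyperbolic space, and then to produce a non-elementary word-hyperbolic quotient by filling in its cusps.

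First I would record the basic structure. If $(W,S)=W_{S_1}\times W_{S_2}$ with $S_1,S_2$ nonempty, then non-affineness of $W$ forces one factor, say $W_{S_1}$, to be non-affine; but $W_{S_1}$ is a proper parabolic, hence spherical or affine, a contradiction. So $W$ is irreducible. Next, let $B$ be the Tits form on $\mathbb{R}^S$. Since every proper parabolic $W_{S\setminus\{s\}}$ is spherical or affine, the restriction of $B$ to each coordinate hyperplane is positive semidefinite; hence any $x$ with $B(x,x)<0$ has all coordinates nonzero, so the open set $\{x:B(x,x)<0\}$ meets no coordinate hyperplane, and each of its connected components is confined to a single orthant. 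A short argument then shows $B$ has exactly one negative eigenvalue and no radical (two negative eigenvalues would make $\{B<0\}$ connected and symmetric under $x\mapsto -x$, impossible in an orthant; a nonzero radical vector $v$, added to a fixed $x_0$ with $B(x_0,x_0)<0$, would yield the line $x_0+\mathbb{R}v$ inside an orthant, again impossible). As $W$ is neither finite nor affine, $B$ is indefinite, hence nondegenerate of signature $(n,1)$ with $n=|S|-1$. By faithfulness of the geometric representation together with the standard theory of hyperbolic reflection groups, $W$ then acts faithfully and discretely on $\mathbb{H}^n$ with fundamental chamber a simplex $\Delta$ bounded by the $n+1$ walls $H_s$. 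The vertex of $\Delta$ opposite $H_t$, namely $\bigcap_{s\ne t}H_s$, lies in $\mathbb{H}^n$, on $\partial\mathbb{H}^n$, or is ultra-ideal according as $W_{S\setminus\{t\}}$ is spherical, affine, or indefinite; minimality rules out the last case, so $\Delta$ has no ultra-ideal vertices, hence finite volume, and $W$ is a lattice in $\mathrm{Isom}(\mathbb{H}^n)$. Here $n\ge 2$ because $W$ is not affine, so this lattice is non-elementary.

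If $\Delta$ is compact — equivalently, $(W,S)$ is a Lann\'er system — then $W$ is a cocompact lattice in $\mathrm{Isom}(\mathbb{H}^n)$, hence word-hyperbolic, and the identity map proves the theorem; the same holds whenever $n=2$, since then the cusp stabilizers are virtually infinite cyclic, $W$ is hyperbolic relative to them, and a group hyperbolic relative to word-hyperbolic subgroups is word-hyperbolic. There remain the finitely many minimal non-affine systems of rank at least $4$ with $\Delta$ non-compact: each is a non-uniform lattice on $\mathbb{H}^n$ with $n\ge 3$ whose cusp stabilizers $P_1,\dots,P_k$ are the affine parabolics $W_{S\setminus\{t_i\}}$ occurring at the ideal vertices, each virtually $\mathbb{Z}^{n-1}$, and $W$ is hyperbolic relative to $\{P_1,\dots,P_k\}$. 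For each such $W$ I would then perform group-theoretic Dehn filling: choose finite-index normal subgroups $N_i\trianglelefteq P_i$ that are sufficiently deep (for instance the finite-index sublattice $m\mathbb{Z}^{n-1}$ of the Euclidean translation subgroup for a large $m$, which is normal in $P_i$ as it is preserved by the finite point group) and set $\bar W=W/\langle\langle N_1\cup\dots\cup N_k\rangle\rangle$. By the Dehn filling theorem $\bar W$ is hyperbolic relative to the finite groups $P_i/N_i$, hence word-hyperbolic, and for deep enough fillings $\bar W$ remains infinite and non-elementary; thus $W\twoheadrightarrow\bar W$ is the required surjection. For $n=3$ this is precisely Thurston's hyperbolic Dehn surgery on the associated finite-volume hyperbolic orbifold; for general $n$ it is the group-theoretic Dehn filling of Osin and of Groves--Manning.

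The hard part is this last step. As a black box it is a routine application of relative Dehn filling, but the point of independent interest is to carry out the construction concretely and uniformly: to pin down admissible filling kernels for each of the finitely many diagrams, to prove word-hyperbolicity of $\bar W$ directly — for instance by establishing a linear isoperimetric inequality on a coned-off analogue of the Davis complex via a combinatorial Cartan--Hadamard argument, or by small cancellation over the cusped pieces — and to check that every such filling is nontrivial and yields a non-elementary group. A secondary point requiring care is the finiteness-of-volume claim for the few systems carrying an $\infty$ label, where $\Delta$ has an ideal edge rather than merely ideal vertices; this is a direct verification with Vinberg's criterion for Coxeter polytopes.
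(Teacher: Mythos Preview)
Your approach is correct and takes a genuinely different route from the paper's. Both begin from the same geometric input (which you reprove and the paper cites from Bourbaki): a minimal non-affine $(W,S)$ is a finite-covolume reflection group on $\mathbb{H}^n$, $n=|S|-1$, with simplicial fundamental chamber whose ideal vertices correspond to the Euclidean maximal parabolics. For the filling step the paper works geometrically and equivariantly: it passes to a torsion-free normal subgroup $\overline{W}\triangleleft W$ of finite index, performs the Mosher--Sageev negatively curved cone-off on the cusped hyperbolic manifold $\mathbb{H}^n/\overline{W}$ to obtain a compact negatively curved pseudomanifold $N$, arranges the construction to be $(W/\overline{W})$-equivariant, and then promotes the virtual quotient $\overline{W}\twoheadrightarrow\pi_1(N)$ to an honest quotient of $W$ by taking the group of all lifts to $\widetilde N$ of the $(W/\overline{W})$-action on $N$. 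You instead observe that $W$ is hyperbolic relative to its virtually abelian cusp subgroups and invoke the group-theoretic Dehn filling theorem of Osin and Groves--Manning directly, with filling kernels $mT_i$ in the translation lattices. Your route is shorter and completely sidesteps the torsion-free passage and the equivariance bookkeeping, since those Dehn filling theorems apply to arbitrary relatively hyperbolic groups; the paper's route is more constructive, producing an explicit $\mathrm{CAT}(-\epsilon)$ model for the quotient and relying only on the 1997 Mosher--Sageev preprint rather than the later relative-hyperbolicity machinery.

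One small aside: your worry about ``ideal edges'' from $\infty$ labels is unnecessary. In rank $\ge 4$ a minimal non-affine system carries no $\infty$ label: if $m_{st}=\infty$ then for each $u\notin\{s,t\}$ the affine group $W_{S\setminus\{u\}}$ must have $\langle s,t\rangle\cong\widetilde A_1$ as a direct factor, so $s,t$ commute with $S\setminus\{s,t,u\}$; varying $u$ forces $s,t$ to commute with all of $S\setminus\{s,t\}$, contradicting irreducibility. In rank $3$ an $\infty$ label merely makes a vertex of the hyperbolic triangle ideal, not an edge, and finite area is immediate.
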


Theorem \ref{hyperbolicquotient}, besides being an important ingredient
of our main result, is interesting in its own right since it gives
a positive answer to a special case of a conjecture of Januszkiewicz
\cite{Gri}, \cite{Jan} that every non-affine Coxeter group
has a non-elementary word-hyperbolic quotient.

The second part of the proof of Theorem \ref{maintheorem} amounts
to the statement that every group with a non-elementary word-hyperbolic
quotient is unbounded in the bi-invariant word metric defined by
any finite set of generators.

The paper is organized as follows. In Section \ref{sectionquotients}
we recall that minimal non-affine Coxeter groups act on hyperbolic
spaces with (possibly unbounded) simplicial fundamental domains,
so that some subgroups of finite index are fundamental groups
of hyperbolic manifolds of finite volume. Then we perform
negatively curved fillings of such hyperbolic manifolds,
generalized to an equivariant setting. This allows to construct
a simply-connected negatively curved space with a geometric action
of a quotient of the Coxeter group, which yields
Theorem \ref{hyperbolicquotient}. We also discuss
a possible generalization of this construction to
all non-affine Coxeter groups. Section \ref{sectionmetric}
is devoted to the proof of Theorem \ref{maintheorem}.
For this purpose we note that it easily reduces
to the special case when the Coxeter group
is minimal non-affine. Then we use the existence
of unbounded quasimorphisms on word-hyperbolic groups
to deduce that any bi-invariant metric on a minimal non-affine
Coxeter group, in particular reflection length, is unbounded.

We assume basic familiarity with Coxeter groups and negative
curvature. For both of these we refer the reader to \cite{Dav}
and \cite{BriHaf}.

\section{Minimal non-affine Coxeter groups}
\label{sectionquotients}

An affine Coxeter group is a direct product of irreducible
spherical and Euclidean reflection groups. A \emph{minimal
non-affine} Coxeter group is a Coxeter group which is itself
non-affine, but all of its proper special subgroups are affine.
Every non-affine Coxeter group has a minimal non-affine
special subgroup; it is just any special subgroup
minimal with respect to inclusion among the non-affine ones.

These groups have the following geometric nature:

\begin{proposition}[\cite{Bou}, V.4, Exercises 12b, 12c, and 13]
\label{minimalnonaffinehn}
Every minimal non-affine Coxeter group $(W,S)$
can be faithfully represented as a discrete reflection group
acting on the hyperbolic space ${\mathbb H}^n$, where $n=|S|-1$
and the elements of $S$ act as reflections with respect to
codimension $1$ faces of a simplex $\sigma$ (some of the vertices
of $\sigma$ might be ideal).
\end{proposition}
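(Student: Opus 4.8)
This is essentially the content of the cited exercises in \cite{Bou}; the plan is to combine the Tits geometric representation with a short spectral argument. Write $S=\{s_1,\dots,s_{n+1}\}$, let $V=\mathbb{R}^{n+1}$ have standard basis $e_1,\dots,e_{n+1}$, and let $B$ be the canonical symmetric bilinear form with $B(e_i,e_i)=1$ and $B(e_i,e_j)=-\cos(\pi/m_{ij})$, read as $-1$ when $m_{ij}=\infty$. By Tits's theorem $s_i(x)=x-2B(x,e_i)e_i$ defines a faithful linear action of $W$ preserving $B$, and a special subgroup $W_T$ with $T\subsetneq S$ is affine precisely when $B$ restricted to $\mathrm{span}\{e_t:t\in T\}$ is positive semidefinite. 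Since $W$ is non-affine, $B$ itself is not positive semidefinite; since every proper special subgroup of $W$ is affine, the Gram matrix of $B$ remains positive semidefinite after deleting any single index, hence after deleting any several of them. Moreover the Coxeter diagram is connected, for otherwise $W$ would be a product of two proper --- hence affine --- special subgroups, and so affine itself.

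The crux is to upgrade these observations to the statement that $B$ is nondegenerate of signature $(n,1)$. I would apply Perron--Frobenius to the nonnegative matrix $N$ given by $N_{ij}=-B(e_i,e_j)$ for $i\ne j$ and $N_{ii}=0$; it is irreducible because its support is exactly the connected Coxeter diagram, so its spectral radius $\rho$ is a simple eigenvalue with a strictly positive eigenvector $u$. As $B=I-N$, the line $\mathbb{R}u$ is the eigenspace of the least eigenvalue $1-\rho$ of $B$, which is thus simple, and non-semidefiniteness of $B$ forces $\rho>1$, that is, $1-\rho<0$. If $B$ had a nonpositive eigenvalue $\lambda$ other than $1-\rho$, with eigenvector $w$ (necessarily orthogonal to $u$, since they belong to distinct eigenvalues of a symmetric matrix), then for an index $j$ with $w_j\ne0$ the vector $v=u-(u_j/w_j)w$ would be nonzero, would have vanishing $j$-th coordinate, and would satisfy $B(v,v)=(1-\rho)|u|^2+(u_j/w_j)^2\lambda|w|^2<0$; but $v$ lies in $\mathrm{span}\{e_i:i\ne j\}$, on which $B$ is positive semidefinite because $W_{S\setminus\{s_j\}}$ is affine --- a contradiction. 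Hence $1-\rho$ is the only nonpositive eigenvalue of $B$ and it is negative, so $B$ is Lorentzian.

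Finally I would read off the geometry. With $B$ a Lorentzian form, $W$ acts by isometries on the upper sheet of $\{x:B(x,x)=-1\}$, a model of $\mathbb{H}^n$; since $B(e_i,e_i)=1>0$ each $e_i$ is space-like and $s_i$ is the reflection in the hyperbolic hyperplane $H_i=e_i^{\perp}$. Because $e_1,\dots,e_{n+1}$ is a basis, the $n+1$ walls $H_i$ are in general position, so the fundamental chamber they bound is a simplex $\sigma$ in the compactification $\overline{\mathbb{H}^n}$, with dihedral angle $\pi/m_{ij}$ along $H_i\cap H_j$. The vertex of $\sigma$ opposite $H_j$ lies on the line $\bigl(\mathrm{span}\{e_i:i\ne j\}\bigr)^{\perp}$; this is an interior point of $\mathbb{H}^n$ when $B$ is positive definite on $\mathrm{span}\{e_i:i\ne j\}$ and an ideal point when that restriction is degenerate --- its radical being forced to be one-dimensional --- and, $W_{S\setminus\{s_j\}}$ being affine, no other case occurs. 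So $\sigma$ is a genuine finite-volume simplex with possibly some ideal vertices, $W$ is the discrete reflection group it generates by the Poincar\'e polyhedron theorem, and faithfulness has already been supplied by Tits. The minimality hypothesis --- that every proper special subgroup is affine --- is what powers each appeal to positive semidefiniteness of a Gram submatrix; the one technically delicate step is the Perron--Frobenius argument of the second paragraph that pins down the signature.
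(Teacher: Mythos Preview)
The paper does not supply a proof of this proposition at all: it is stated with a bare citation to Bourbaki's exercises, and the paper immediately proceeds to use it. So there is no ``paper's own proof'' to compare against.

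Your reconstruction is correct and is essentially the intended solution of the cited exercises. The Perron--Frobenius step is the right way to pin down the signature: irreducibility of $N$ follows from connectedness of the Coxeter diagram (which you correctly deduce from minimality), simplicity of the Perron eigenvalue gives a unique lowest eigenvalue for $B=I-N$, and your contradiction argument ruling out a second nonpositive eigenvalue is clean. The geometric reading in the last paragraph is also right; in particular your dichotomy for the vertex opposite $H_j$ is exactly what happens, and your parenthetical that the radical of $B|_{V_j}$ is forced to be one-dimensional follows immediately from $\dim V_j^{\perp}=1$ in the nondegenerate ambient form. One small point you leave implicit is that the closed chamber $\{x:B(x,e_i)\le 0\ \forall i\}$ actually meets the upper hyperboloid sheet; this is immediate once you know the Perron eigenvector $u$ has all positive entries and $B(u,u)<0$, so a suitable multiple of $u$ lies in the interior of the chamber and on the hyperboloid.
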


From now on we identify $W$ with its image
in $\hbox{Isom}({\mathbb H}^n)$ under this
representation.

As a linear group $W$ has a torsion-free normal subgroup $W'$
of finite index. Any such $W'$ acts freely on ${\mathbb H}^n$
and the quotient ${\mathbb H}^n/W'$ is a complete $n$-dimensional
hyperbolic manifold. If some vertices of $\sigma$
are ideal, then this manifold is not compact and has
finitely many cusps, each having a horoball neighborhood
homeomorphic to a product of an $(n-1)$-dimensional
compact flat manifold and a half-line.

In order to prove Theorem \ref{hyperbolicquotient},
we are going to repeat the procedure described in \cite{MosSag},
which produces word-hyperbolic quotients of fundamental
groups of hyperbolic manifolds. However,
we actually need a quotient of the group $W$,
not just of its subgroup of finite index. For this reason
we will have to review the methods of \cite{MosSag} and
verify at each step that they can be applied
in an equivariant setting.

\begin{proposition}
\label{equivariantmanifold}
There exists a torsion-free normal subgroup
$\overline{W}\triangleleft W$ of finite index,
with the following property:
One can remove open horoball neighborhoods of all cusps
of the hyperbolic manifold ${\mathbb H^n}/\overline{W}$
to obtain a manifold $M$ with boundary composed of
compact flat manifolds, such that:
\begin{enumerate}
\item Any closed geodesic in a component of $\partial M$
has length $>2\pi$.
\item The isometric action of the finite group $F=W/\overline{W}$
on the manifold ${\mathbb H^n}/\overline{W}$ restricts
to an isometric action of $F$ on $M$ preserving $\partial M$.
\end{enumerate}
\end{proposition}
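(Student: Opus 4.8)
The plan is to combine the classical theory of cusps of finite-volume hyperbolic manifolds with the residual finiteness of $W$; the only genuinely new feature is that everything must be carried out $W$-equivariantly, not merely for some torsion-free subgroup of finite index.

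I would start by recording the shape of the cusp stabilizers. If $v$ is an ideal vertex of $\sigma$, then $\mathrm{Stab}_W(v)$ is the special subgroup generated by the reflections in the $n$ facets of $\sigma$ through $v$; as it involves only $n<|S|$ generators it is a proper, hence affine, special subgroup, and since its generating reflections fix hyperplanes through $v$ the whole subgroup preserves every horosphere centered at $v$ --- a copy of ${\mathbb E}^{n-1}$ --- and acts on it with fundamental domain the horospherical cross-section of $\sigma$ at $v$. Since the simplex $\sigma$ has finite volume these cross-sections are compact, so every cusp stabilizer is a crystallographic group acting cocompactly on its horospheres, and ${\mathbb H}^n/W$ has finitely many cusps.

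Working with $W$ alone, I would next fix a $W$-equivariant family $\mathcal B\subset{\mathbb H}^n$ of pairwise disjoint open horoballs centered at the ideal vertices of the tessellation of ${\mathbb H}^n$ by copies of $\sigma$: choose a horoball at each ideal vertex of $\sigma$, small enough that the resulting $W$-orbit of horoballs is pairwise disjoint and defines embedded cusp neighborhoods in ${\mathbb H}^n/W$ (the standard construction; the assignment spreads consistently under $W$ because $\mathrm{Stab}_W(v)$ preserves every horoball at $v$), and let $\mathcal B$ be the union of this orbit. Each cusp stabilizer $\Gamma$, being crystallographic, has only finitely many conjugacy classes of elements whose minimal displacement along the relevant horosphere is $\le 2\pi$ (discreteness of the length spectrum of a flat metric); choosing one nontrivial representative for each such class and each cusp gives a finite set $g_1,\dots,g_N\in W\setminus\{1\}$.

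Finally, residual finiteness of the linear group $W$ produces, for each $j$, a finite-index normal subgroup of $W$ avoiding $g_j$; intersecting these with a torsion-free normal finite-index subgroup supplied by Selberg's lemma gives $\overline W\triangleleft W$, and I set $M=({\mathbb H}^n\setminus\mathcal B)/\overline W$. Since $\overline W$ is torsion-free it acts freely, so $M$ is a manifold; it is the complement in the finite-volume hyperbolic manifold ${\mathbb H}^n/\overline W$ of embedded open horoball neighborhoods of all of its cusps, hence compact, and $\partial M$ is a disjoint union of compact flat manifolds, each a quotient of a horosphere by a torsion-free cocompact crystallographic group $\Gamma\cap\overline W$ (for a cusp stabilizer $\Gamma$). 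Property (1) holds because a closed geodesic of length $\le 2\pi$ in a component of $\partial M$ would be represented by an element of $\Gamma\cap\overline W$ of displacement $\le 2\pi$, hence $W$-conjugate to some $g_j$; since $\overline W$ is normal and misses $g_j$, that is impossible. Property (2) holds because $\mathcal B$ is $W$-invariant, so its image in ${\mathbb H}^n/\overline W$ is invariant under $F=W/\overline W$, and the isometric $F$-action on ${\mathbb H}^n/\overline W$ restricts to an isometric action on $M$ preserving $\partial M$. The step requiring care is precisely the tension between (1) and (2): one cannot enlarge the horoballs to lengthen boundary geodesics without losing embeddedness, so (1) must instead be achieved by passing to a deep cover, and this cover must be normal over all of $W$ --- residual finiteness furnishes the cover, and the normality of $\overline W$ is what simultaneously produces the $F$-action and reduces (1) to the conjugacy argument above.
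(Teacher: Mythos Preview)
Your proof is correct and follows essentially the same route as the paper: a $W$-invariant family of horoballs at the ideal vertices, a finite ``bad'' set of short cusp-stabilizer elements, residual finiteness plus Selberg's lemma to produce a torsion-free normal $\overline{W}$ avoiding them, and normality used both to get the $F$-action and to reduce property~(1) to a conjugacy argument. The only cosmetic difference is that the paper takes as its bad set all nontrivial elements $v$ of the cusp stabilizer with $\operatorname{dist}(\tau_s,v(\tau_s))\le 2\pi$ for a fixed fundamental simplex $\tau_s$, rather than conjugacy-class representatives of elements of minimal displacement $\le 2\pi$; both sets are finite and serve the same purpose.
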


\begin{proof}
The collection of simplices $\{w(\sigma):w\in W\}$
forms a tessellation of ${\mathbb H}^n$ with some ideal vertices,
namely the images of ideal vertices of $\sigma$ under
the action of $W$.

For each ideal vertex of the simplex $\sigma$ take a small open
horoball centered at that vertex, thus obtaining
a collection ${\mathcal B}_0$ of pairwise disjoint horoballs.
Then ${\mathcal B}=\{wB:w\in W,B\in\mathcal B_0\}$
is a $W$-invariant collection of pairwise disjoint
horoballs in ${\mathbb H}^n$ centered at the ideal vertices
of the tessellation. Also, $W$ acts by isometries
on the complement ${\mathbb H}^n\setminus\bigcup{\mathcal B}$.

For every $s\in S$ such that $S\setminus\{s\}$
generates a Euclidean special subgroup $V_s$ of $W$
acting on the horosphere ${\mathbb E}_s=\partial B_s$,
where $B_s\in{\mathcal B}_0$, we will define
a finite set $A_s\subset V_s$.
Fix a simplex $\tau_s\subset{\mathbb E}_s$ fundamental
for the action of $V_s$ on ${\mathbb E}_s$ and let
$$A_s=\{v\in V_s\setminus\{1\}:\hbox{dist}(\tau_s,v(\tau_s))\le 2\pi\}.$$
Since $W$ is virtually torsion-free and residually finite,
we can find a torsion-free normal subgroup
$\overline{W}\triangleleft W$ disjoint with
each of the sets $A_s$.

We claim that $\overline{W}$ is our desired group.

The union $\bigcup{\mathcal B}$ is $W$-invariant,
and so is its boundary composed of horospheres.
Hence we can define the required manifold $M$ as
the quotient of ${\mathbb H}^n\setminus\bigcup{\mathcal B}$
under the action of $\overline{W}$. The isometric
action of $W$ on ${\mathbb H}^n\setminus\bigcup{\mathcal B}$
descends to an isometric action of the finite quotient group
$F=W/\overline{W}$ on the compact quotient manifold $M$,
and clearly $\partial M$ is preserved under that action.
This proves (2).

It remains to show (1). A closed geodesic in a component
of $\partial M$ can be lifted to a segment in the boundary
of a horoball from ${\mathcal B}$ whose endpoints
belong to the same $\overline{W}$-orbit. Therefore it suffices
to verify that for each point $p$ lying on the boundary
$\partial B$ of some horoball $B\in{\mathcal B}$
and for any nontrivial element $w\in\overline{W}$
such that $w(p)\in\partial B$, the distance
between $p$ and $w(p)$ in $\partial B$ satisfies
$d(p,w(p))>2\pi$. But there exists an element $u\in W$
such that $u(B)=B_s$ for some $s\in S$ and the horoball
$B_s\in{\mathcal B}_0$ centered at the ideal vertex of $\sigma$
corresponding to $s$. Moreover, multiplying $u$ by an
element of the special subgroup $V_s$
of $W$ we may assume that $u(p)=q\in\tau_s$. Then we have
$$d(p,w(p))=d(u^{-1}(q),w(u^{-1}(q)))=d(q,u(w(u^{-1}(q))))=d(q,w'(q)),$$
where $w'=uwu^{-1}\in\overline{W}$ as $\overline{W}$ is normal
in $W$. Now $w'$ is nontrivial and $w'\not\in A_s$ for all $s$.
Finally, the definition of the sets $A_s$ implies that
$d(q,w'(q))>2\pi$.
\end{proof}

From now on fix a subgroup $\overline{W}$ as in Proposition
\ref{equivariantmanifold}. The symbols $M$ and $F$ used there
will also retain their meaning.

We are going to recall some portions of the proof of
Negative Curvature Theorem in \cite{MosSag} and
investigate or slightly modify them with the aim
of ensuring $F$-equivariance.

Let ${\mathcal T}$ be the collection of components of $\partial M$.
For each $T\in{\mathcal T}$ choose a number
$r_T\in(-L/2\pi,-1)$, where $L$ is the length of the
shortest closed geodesic on $T$. We can choose the numbers $r_T$
so that whenever $T_1,T_2\in{\mathcal T}$ are isometric,
we have $r_{T_1}=r_{T_2}$. Then for each $T\in{\mathcal T}$
take $T\times[r_T,0]$, collapse $T\times\{r_T\}$ to a point $p_T$
and glue the resulting cone $C(T)$ to $M$ by identifying
$(x,0)\in T\times\{0\}\subset C(T)$ with $x\in T\subset M$.
Let $N$ be the space obtained by gluing all the cones $C(T)$
to $M$. Note that $N$ is a $n$-dimensional pseudomanifold,
and in fact a manifold except at the points $p_T$.

We will put a metric on $N$ so that the isometric action of $F$ on $M$
extends to an isometric action on $N$. On the manifold
$N\setminus\{p_T:T\in{\mathcal T}\}$
this will actually be a Riemannian metric.

Let $ds$ be the Riemannian hyperbolic metric on $M$.
Pick a boundary component $T\in{\mathcal T}$ and let $ds_T$ be
the restriction of $ds$ to $T$; this is a Euclidean
metric on $T$. Moreover, for some small $\epsilon>0$
the metric on $T\times[0,\epsilon)\subset M$ is given by
$$ds^2=e^{2r}ds_T^2+dr^2.$$
Extend this formula for a Riemannian metric to
$T\times(-\epsilon,\epsilon)\subset N$.
Now define a metric on $T\times(r_T,r_T+\epsilon)$ by
$$ds^2=\left(\frac{2\pi}{L}\sinh(r-r_T)\right)^2ds_T^2+dr^2.$$
The metric is now defined on $T\times[(r_T,r_T+\epsilon)\cup(-\epsilon,\epsilon)]$
and on the intersection of this set with $M$ it coincides
with the original metric on $M$. Finally, let the metric
on $T\times(r_T,\epsilon)$ have the form
\begin{equation}
\label{metriconn}
ds^2=f(r)^2 ds_T^2+dr^2,
\end{equation}
where $f$ is a smooth positive increasing convex function such that
(\ref{metriconn}) agrees with the two previous formulas on their domains
(see \cite{MosSag}, Figure 1). Moreover, one can clearly ensure that
whenever $T_1,T_2\in{\mathcal T}$ are isometric, the corresponding
two functions $f$ coincide.

To extend the above metric to the points $p_T$ it suffices to note
that the diameter of $T\times\{r\}$ tends to zero as $r\to r_T$.
Thus we may simply take the completion of the already defined metric
on $N\setminus\{p_T:T\in{\mathcal T}\}$. In this completed metric $N$
is a compact geodesic space.

\begin{proposition}
\label{nisnegativelycurved}
The above construction has the following properties:
\begin{enumerate}
\item The compact space $N$ is negatively curved.
\item Any isometry $\rho:T_1\to T_2$ between (not necessarily distinct)
components $T_1,T_2\in{\mathcal T}$ extends to an isometry $\widehat{\rho}
:C(T_1)\to C(T_2)$. Consequently, the isometric action of $F$ on $M$
extends to an isometric action on $N$.
\end{enumerate}
\end{proposition}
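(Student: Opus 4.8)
The plan is to verify that $N$ is negatively curved by a pointwise (local) argument, treating the smooth part $N\setminus\{p_T:T\in\mathcal T\}$ and the finitely many cone points $p_T$ separately, and then promoting the pointwise bounds to a single negative upper bound using compactness of $N$. On the interior of $M$ the metric is hyperbolic, hence locally CAT($-1$). On each modified collar $T\times(r_T,\epsilon)$ the metric (\ref{metriconn}) is a warped product $ds^2=f(r)^2 ds_T^2+dr^2$ over the \emph{flat} manifold $(T,ds_T)$; for such a metric the sectional curvature of a $2$-plane containing $\partial_r$ equals $-f''/f$, that of a $2$-plane tangent to $T$ equals $(K_T-(f')^2)/f^2=-(f'/f)^2$ because $T$ is flat, and the curvature of any other $2$-plane is a weighted average of these two. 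Since $f$ is positive, increasing and convex --- with $f'>0$ and $f''>0$ holding automatically where $f(r)=e^r$ or $f(r)=\tfrac{2\pi}{L}\sinh(r-r_T)$, and arrangeable on the transition region as in \cite{MosSag} --- all these curvatures are strictly negative, so every point of $N\setminus\{p_T:T\in\mathcal T\}$ has a neighbourhood of sectional curvature bounded above by some negative constant, hence a CAT($\kappa_x$) neighbourhood with $\kappa_x<0$.

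At a cone point $p_T$ I would use the cone construction. Near $p_T$ one has $f(r)=\tfrac{2\pi}{L}\sinh(r-r_T)$, where $L$ is the length of the shortest closed geodesic of $T$; writing $t=r-r_T$, the metric becomes $dt^2+\sinh^2(t)\,g_Y$ with $g_Y=(2\pi/L)^2 ds_T^2$, so a punctured neighbourhood of $p_T$ is an open subset of the hyperbolic cone over $Y=(T,\tfrac{2\pi}{L}ds_T)$. The space $Y$ is a flat manifold, hence locally CAT($0$) and a fortiori locally CAT($1$), and by Proposition \ref{equivariantmanifold}(1) the shortest closed geodesic of $(T,ds_T)$ has length $L>2\pi$, so the shortest closed geodesic of $Y$ has length exactly $2\pi$ and $Y$ has no closed geodesic shorter than $2\pi$. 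By the standard criterion for CAT($1$) spaces (see \cite{BriHaf}), $Y$ is CAT($1$), and then by Berestovskii's theorem on cones (see \cite{BriHaf}) the hyperbolic cone over $Y$ is CAT($-1$); thus $p_T$ has a CAT($-1$) neighbourhood. Since $N$ is compact it is covered by finitely many of the neighbourhoods produced so far, each CAT($\kappa_x$) for some $\kappa_x<0$, and a CAT($\kappa_x$) space is CAT($\kappa$) for $\kappa:=\max_x\kappa_x<0$; therefore $N$ is locally CAT($\kappa$), i.e.\ negatively curved, which is (1).

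For (2), let $\rho:T_1\to T_2$ be an isometry between boundary components. Isometries preserve lengths of closed geodesics, so $L_{T_1}=L_{T_2}$; by the choices made in the construction we also have $r_{T_1}=r_{T_2}$, and the corresponding warping functions $f$ coincide as well, so $C(T_1)$ and $C(T_2)$ are obtained by the same formula over the isometric cross-sections. Then $\widehat\rho(x,r)=(\rho(x),r)$ for $r>r_{T_1}$, together with $\widehat\rho(p_{T_1})=p_{T_2}$, pulls the metric of $C(T_2)$ back to that of $C(T_1)$ on the smooth part and extends continuously to the apex; since the cone metrics are completions of their smooth parts, $\widehat\rho$ is an isometry $C(T_1)\to C(T_2)$ restricting to $\rho$ on $T_1$. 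Applying this to $\rho=g|_T:T\to gT$ for each $g\in F$ and each $T\in\mathcal T$ and gluing along $M$ --- using that these extensions agree with the given action of $g$ on $M$ along $T=C(T)\cap M$, that $C(T)\cap C(T')=\emptyset$ for $T\ne T'$, and that $\widehat{(g_1g_2)|_T}=\widehat{g_1|_{g_2T}}\circ\widehat{g_2|_T}$ --- yields an isometric $F$-action on $N$ extending the one on $M$.

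The step I expect to be the main obstacle is the curvature estimate at the cone points: the fact that the hyperbolic cone over a flat manifold is negatively curved precisely when that manifold, rescaled to systole $2\pi$, is CAT($1$) is the heart of Mosher--Sageev's Negative Curvature Theorem, and it is exactly the inequality ``length $>2\pi$'' supplied by Proposition \ref{equivariantmanifold}(1) that makes it go through. By contrast, the warped-product curvature computation on the transition collar and the verification that $\widehat\rho$ is an isometry are routine once $f$ is chosen as in \cite{MosSag}.
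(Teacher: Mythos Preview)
Your argument is correct and matches the paper's approach. For (1) the paper simply cites the Negative Curvature Theorem of \cite{MosSag}, and what you have written is a faithful sketch of that proof (warped-product curvature formulas on the smooth part, Berestovskii's cone theorem at the apices using that the rescaled link has systole $\ge 2\pi$, compactness for a uniform bound); for (2) the paper defines $\widehat\rho(t,r)=(\rho(t),r)$ and invokes the radial form (\ref{metriconn}) of the metric together with the equalities $r_{T_1}=r_{T_2}$ and matching warping functions for isometric components, exactly as you do.
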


\begin{proof}
(1) is proved in \cite{MosSag}, where it is written
in the case when all manifolds $T\in{\mathcal T}$ are tori,
but it works without any change provided that every $T\in{\mathcal T}$
is a compact flat manifold with no closed geodesics
of length $\le 2\pi$.

To prove (2), we define $\widehat{\rho}(t,r)=(\rho(t),r)$.
Then $\widehat{\rho}$ is constant in the $r$-direction and for every
$r\in[r_{T_1},0]=[r_{T_2},0]$ it induces an isometry between
the sections $T_1\times\{r\}\subset C(T_1)$ and
$T_2\times\{r\}\subset C(T_2)$. The radial nature
of the Riemannian metric (\ref{metriconn}) shows
that $\widehat{\rho}$ is an isometry.
\end{proof}

The cones $C(T)$ are contractible, hence the inclusion
$M\hookrightarrow N$ induces a surjection $\overline{W}=\pi_1(M)\to\pi_1(N)$.
The group $\overline{H}=\pi_1(N)$ can be obtained from $\overline{W}$
by killing all elements of $\overline{W}$ representing a closed geodesic
in some boundary component $T\subset\partial M$.

By Proposition \ref{nisnegativelycurved}(1) $\overline{H}$ is word-hyperbolic.
It is obviously non-elementary; as the fundamental group of a compact
aspherical $n$-dimensional pseudomanifold it has cohomological dimension $n$.
However, in order to prove Theorem \ref{hyperbolicquotient} we have to
lift the virtual surjection $\overline{W}\to\overline{H}$ to an actual
surjection $W\to H$.

\begin{proof}[Proof of Theorem \ref{hyperbolicquotient}]
The inclusion $M\hookrightarrow N$ can be lifted to a map
of the universal covers $\Psi:\widetilde{M}\to\widetilde{N}$.
Note that $\widetilde{M}$ is just ${\mathbb H}^n$ with
a collection of disjoint open horoballs removed.
The symbols $\pi_M$ and $\pi_N$ will denote
the projections $\widetilde{M}\to M$ and $\widetilde{N}\to N$,
respectively. The projection $\pi_N$ restricted to
$\pi_N^{-1}(M)$ is a covering map over $M$.
Similarly for $\pi_N^{-1}(N\setminus M)$;
however, since $N\setminus M$ is a union of the interiors
of the cones $C(T)$ which are simply-connected,
the preimage $\pi^{-1}(N\setminus M)$ is a union
of copies of the interiors of the cones $C(T)$.
It follows that every path joining two points of $\pi_N^{-1}(M)$
is homotopic relative to endpoints to a path contained
in $\pi_N^{-1}(M)$ --- any segment of such a path contained
in a copy of the interior of some cone $C(T)$
can be homotoped to its base.
Consequently, $\pi_N^{-1}(M)$ is connected.
In general $\Psi$ is always a covering map over
a connected component of $\pi_N^{-1}(M)$, so
in our case the image of $\Psi$ is precisely $\pi_N^{-1}(M)$.

The finite group $F=W/\overline{W}$ acts by isometries
on $M$. It follows that $\widetilde{M}$ is invariant
under the action of $W$ on ${\mathbb H}^n$. In other words,
$W$ might be thought of as the group of those
isometries of $\widetilde{M}$ that are lifts of
isometries of $M$ belonging to $F$.

By Proposition \ref{nisnegativelycurved}(2) every element $f\in F$
also acts as an isometry of $N$, so it can be lifted to
an isometry $\widetilde{f}$ of $\widetilde{N}$. Of course,
such a lift is not unique and all possible lifts $\widetilde{f}$
form a $\overline{H}$-coset in $\hbox{Isom}(\widetilde{N})$. Let $H$
be the group of all isometries of $\widetilde{N}$ that are lifts
of some isometry $f\in F$ of $N$. Since the action of $F$
on $N$ is effective (it was already effective on the subspace
$M\subset N$), we see that $H/\overline{H}\cong F$.
Hence $H$ contains a word-hyperbolic subgroup $\overline{H}$
of finite index and thus $H$ is itself word-hyperbolic.
All that is left to show is that there exists a surjection
$\eta:W\to H$.

To this end, consider $w\in W$ as an isometry of $\widetilde{M}$.
We wish to define $\eta(w)\in H$. For this purpose note that
$f=w\overline{W}\in F$ is an isometry of $M$, and therefore
also an isometry of $N$, sending $\pi_M(p)$ to $\pi_M(w(p))$
for any point $p\in\widetilde{M}$. Moreover, since the map
$\Psi:\widetilde{M}\to\widetilde{N}$ covers the inclusion
$M\hookrightarrow N$, we have $\pi_N(\Psi(p))=\pi_M(p)\in N$
and $\pi_N(\Psi(w(p)))=\pi_M(w(p))\in N$. Hence, for a fixed
point $p\in\widetilde{M}$, the isometry $f$ of $N$ can be lifted
to the unique isometry $\widetilde{f}$ of $\widetilde{N}$ satisfying
$$\widetilde{f}(\Psi(p))=\Psi(w(p)).$$
Note that the lift $\widetilde{f}$ in fact does not depend on $p$.
This is because $\Psi(p)$ and $\Psi(w(p))$ in the above equality
vary continuously with $p$ and for any $x\in\widetilde{N}$ the set of
all $\widetilde{f}(x)$ for all lifts $\widetilde{f}$ of $f$
is discrete in $\widetilde{N}$. Hence if we let $\eta(w)=\widetilde{f}$,
then we obtain a function $\eta:W\to H$ such that
$$\eta(w)(\Psi(m))=\Psi(w(m))$$
for all $w\in W$ and $m\in\widetilde{M}$.
Thus for any $w_1,w_2\in W$ we have
$$\eta(w_1w_2)(\Psi(m))=\Psi(w_1(w_2(m)))=
\eta(w_1)(\Psi(w_2(m)))=\eta(w_1)\eta(w_2)(\Psi(m)),$$
and since $\eta(w_1w_2)$ and $\eta(w_1)\eta(w_2)$ are lifts
of the same isometry of $M$, namely $w_1w_2\overline{W}=
w_1\overline{W}\cdot w_2\overline{W}\in F$,
we conclude that $\eta$ is a homomorphism.

It remains to verify that $\eta$ is surjective. Take $\xi\in H$
and a point $q\in\pi_N^{-1}(M)$. Then $\xi$ is a lift
of an isometry $f\in F$ of $N$ (hence also of $M$)
sending $\pi_N(q)\in M$ to $\pi_N(\xi(q))\in M$.
Note that $\xi(q)\in\pi_N^{-1}(M)$, hence
there exist points $p,p'\in\widetilde{M}$
such that $\Psi(p)=q$ and $\Psi(p')=\xi(q)$.
In this situation an element $w\in W$ sent to $\xi\in H$
by $\eta$ can be defined as the lift of $f$ 
satisfying $w(p)=p'$. Such a lift exists
because $\pi_M(p)=\pi_N(\Psi(p))=\pi_N(q)$
and $\pi_M(p')=\pi_N(\Psi(p'))=\pi_N(\xi(q))
=f(\pi_N(q))$.

The proof is now finished.
\end{proof}

\begin{remark}
{\rm
The above construction can be described in the language
of simplices of groups (see \cite{BriHaf}, II.12
for background on complexes of groups). Note that
$W$ is the fundamental group of the $n$-simplex of groups
with the following data: the vertices are indexed by $S$
and the local group assigned to the simplex spanned by $T\subset S$
is the special subgroup of $W$ generated by $S\setminus T$.
In our case all local groups are finite except possibly
at the vertices, where some local groups might be
irreducible affine. The group $H$ is constructed
by taking a sufficiently large finite quotient of $W$
and replacing each local group with its image
in this quotient. Note that those local groups
that were already finite are not affected.

One is tempted to perform this construction for an arbitrary
non-affine Coxeter group $(W,S)$. Indeed, let
$S'\subset S$ be a subset generating a minimal non-affine
special subgroup of $W$. Let $\pi:W\to D$ be
a surjection onto a sufficiently large finite group
and define a simplex of groups: its vertices are indexed
by $S'$ and the local group associated to $T\subset S'$
is $\pi(\langle T\cup S\setminus S'\rangle)$.
This simplex is developable, since it admits a homomorphism
to $D$ which is injective on the local groups.
The natural question is whether the fundamental group
of this simplex is word-hyperbolic. A positive
answer would resolve the conjecture of Januszkiewicz.
However, this is difficult even in the case when $S'$ generates
a hyperbolic triangular group, since one needs
an understanding of finite quotients and those supplied
by the residual finiteness of linear groups do not seem
to serve this purpose well enough.
}
\end{remark}

\section{Hyperbolic quotients and bi-invariant word metrics}
\label{sectionmetric}

\begin{remark}
\label{reductiontominimal}
{\rm
In order to prove that reflection length is unbounded in all
non-affine Coxeter groups, we may specialize to minimal
non-affine groups. Indeed, let $(W,S)$ be a non-affine
Coxeter group and let $(W',S')$ be any of its minimal non-affine
special subgroups. Then by Corollary 1.4 in \cite{Dye}
the restriction of $||\cdot||_R$ defined by (\ref{refllengthdef})
to the subgroup $W'$ coincides with the reflection length
$||\cdot||_{R'}$ defined analogously to (\ref{refllengthdef})
for $(W',S')$ considered as a Coxeter group in its own right.
Hence if $||\cdot||_{R'}$ is unbounded, then $||\cdot||_R$
is unbounded, too.
}
\end{remark}

A quasimorphism of a group $G$ is a function $\varphi:G\to{\mathbb R}$
such that
$$\sup_{g,h\in G}|\varphi(gh)-\varphi(g)-\varphi(h)|<\infty.$$
A quasimorphism is homogeneous if $\varphi(g^n)=n\varphi(g)$
for all $g\in G$ and all integers $n$. Note that a non-zero
homogeneous quasimorphism is unbounded.

The following fact is an elementary application of
cohomology and bounded cohomology (see \cite{Cal},
\cite{EpsFuj}) to word-hyperbolic groups.

\begin{proposition}
\label{hyperbolichavequasihom}
For any non-elementary word-hyperbolic group $G$ there exists
an unbounded homogeneous quasimorphism $\varphi:G\to{\mathbb R}$.
\end{proposition}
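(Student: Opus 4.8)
The plan is to exhibit an unbounded homogeneous quasimorphism on $G$ by invoking the standard machinery relating second bounded cohomology to ordinary cohomology. First I would recall the exact sequence
$$H^1_b(G;{\mathbb R})\to H^1(G;{\mathbb R})\to H^2_b(G;{\mathbb R})\xrightarrow{c} H^2(G;{\mathbb R}),$$
coming from the short exact sequence of coefficient complexes relating bounded cochains to all cochains, together with the elementary identification of the space of homogeneous quasimorphisms on $G$ with the kernel of the comparison map $c$ (modulo homomorphisms $G\to{\mathbb R}$, which for $G$ finitely generated with finite abelianization — the case of interest, since word-hyperbolic groups are finitely presented — all vanish). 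Concretely: every homogeneous quasimorphism $\varphi$ has a coboundary $\delta\varphi$ which is a bounded $2$-cocycle, and $\delta\varphi$ is a coboundary of a bounded $1$-cochain precisely when $\varphi$ is bounded; so nonzero elements of $EH^2_b(G;{\mathbb R})=\ker c$ correspond exactly to unbounded homogeneous quasimorphisms (up to homomorphisms). Thus it suffices to show $EH^2_b(G;{\mathbb R})\neq 0$ for $G$ non-elementary word-hyperbolic.

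The second and main step is the input from geometric group theory: for a non-elementary word-hyperbolic group $G$, the space $EH^2_b(G;{\mathbb R})$ is infinite-dimensional. This is a theorem of Epstein and Fujiwara \cite{EpsFuj}, building on Brooks's construction of quasimorphisms from counting functions of subwords: one fixes a hyperbolic element (for instance any infinite-order element $g\in G$, whose existence follows from non-elementariness), considers the associated ``big counting quasimorphism'' defined by counting copies of long powers $g^N$ (and their inverses, with appropriate sign) along geodesic words, and verifies using the thin-triangles condition that the defect is finite and that the resulting homogeneous quasimorphism is not a homomorphism — it is positive on $g$ but vanishes on conjugates of $g^{-1}$, or more robustly one plays off two independent hyperbolic elements whose axes are not parallel. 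In any case all I need is a single nonzero class, so I would simply quote \cite{EpsFuj} (or \cite{Cal}) for the statement that non-elementary word-hyperbolic groups admit unbounded homogeneous quasimorphisms, which is exactly Proposition \ref{hyperbolichavequasihom}.

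Finally I would close the loop: pick such a nonzero homogeneous quasimorphism $\varphi$; by homogeneity $\varphi(g^n)=n\varphi(g)$, so if $g$ is chosen with $\varphi(g)\neq 0$ then $|\varphi(g^n)|\to\infty$, hence $\varphi$ is unbounded, as the remark preceding the proposition already notes. The one point requiring a word of care is the passage from ``$\ker c\neq 0$'' to ``there is an \emph{unbounded} quasimorphism'': one must rule out that the new class is absorbed by the image of $H^1(G;{\mathbb R})$, i.e. that it comes from an honest homomorphism $G\to{\mathbb R}$ — but since a non-elementary word-hyperbolic group has finite abelianization (indeed it has property that $H^1(G;{\mathbb R})$ may be nonzero in general, so the clean argument is rather to use that $EH^2_b$ is \emph{infinite}-dimensional while $H^1(G;{\mathbb R})$ is finite-dimensional, forcing $\ker c$ to strictly contain the image of $H^1$, hence producing a genuinely unbounded quasimorphism). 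This dimension-count is the only real subtlety, and it is handled entirely by the cited results of Epstein--Fujiwara; everything else is the formal homological bookkeeping sketched above.
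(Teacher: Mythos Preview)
Your approach is essentially the same as the paper's: invoke the exact sequence linking homogeneous quasimorphisms to bounded cohomology, cite Epstein--Fujiwara for the infinite-dimensionality of $H^2_b(G;\mathbb{R})$ (or its exact part), and use a finite-dimensionality count on ordinary cohomology to conclude. One small slip to clean up: non-elementary word-hyperbolic groups need \emph{not} have finite abelianization (free groups and surface groups are counterexamples), but you correctly abandon that claim and fall back on the dimension comparison, which is exactly what the paper does.
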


\begin{proof}
The bounded cohomology $H^2_b(G;{\mathbb R})$ is infinite-dimensional
(\cite{EpsFuj}, Theorem 1.1). On the other hand, $H^2(G;{\mathbb R})$
is finite-dimensional since $G$ is finitely presented. Moreover,
we have an exact sequence
$$0\to H^1(G;{\mathbb R})\to Q(G)\to H^2_b(G;{\mathbb R})\to H^2(G;{\mathbb R})$$
(\cite{Cal}, Theorem 2.50), where $Q(G)$ is the vector space
of homogeneous quasimorphisms of $G$. It follows that $Q(G)$
is not only non-trivial, but actually infinite-dimensional.
\end{proof}

For a group $G$ generated by a finite set $X$ we can define
a bi-invariant word metric on $G$ in the following way.
Let $X^*=\{gxg^{-1}:x\in X,g\in G\}$ and let
$$||g||_{X^*}=\min\{n:g=g_1g_2\ldots g_n
\hbox{ for some }g_i\in X^*\}.$$
Note that (\ref{refllengthdef}) is an example of a
bi-invariant metric. The bi-invariance means here
that $||\cdot||_{X^*}$ is invariant
under conjugation: $||kgk^{-1}||_{X^*}
=||g||_{X^*}$ for all $k,g\in G$.

If $f:G_1\to G_2$ is a surjective homomorphism
and $G_1$ is generated by a finite set $X_1$,
then $G_2$ is generated by $X_2=f(X_1)$ and
the associated bi-invariant word metrics satisfy
$$||f(g)||_{X_2^*}\le||g||_{X_1^*}$$
for every $g\in G_1$. In particular, if
$||\cdot||_{X_2^*}$ is unbounded,
then so is $||\cdot||_{X_1^*}$.

\begin{proposition}[\cite{GalKed}, Lemma 3.7]
\label{quasihominequality}
Let $G$ be a group generated by a finite set $X$
with the associated bi-invariant word metric
$||\cdot||_{X^*}$.
Then for any homogeneous quasimorphism $\varphi:G\to{\mathbb R}$
there exists a constant $C>0$ such that
$$||g||_{X^*}\ge C\cdot\varphi(g)$$
for every $g\in G$.
\end{proposition}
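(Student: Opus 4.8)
The plan is to reduce the inequality to two standard properties of a homogeneous quasimorphism $\varphi$: that it is invariant under conjugation, and that its failure to be additive is uniformly controlled by the defect $D=\sup_{g,h\in G}|\varphi(gh)-\varphi(g)-\varphi(h)|<\infty$. First I would record the conjugation invariance: for any $g,h\in G$ one has $\varphi(hgh^{-1})=\varphi(g)$. This follows by applying homogeneity to the identity $(hgh^{-1})^n=hg^nh^{-1}$ and the defect inequality (together with $\varphi(h^{-1})=-\varphi(h)$ and $\varphi(g^n)=n\varphi(g)$) to obtain $|n\varphi(hgh^{-1})-n\varphi(g)|\le 2D$ for every integer $n$, and then dividing by $n$ and letting $n\to\infty$. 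This is the one genuinely non-formal input, and I expect it to be the crux of the argument; everything afterwards is bookkeeping with $D$.

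Next, set $M=\max_{x\in X}|\varphi(x)|$. Since every element of $X^*$ has the form $hxh^{-1}$ with $x\in X$ and $h\in G$, conjugation invariance gives $|\varphi(y)|\le M$ for all $y\in X^*$. Now take $g\in G$ with $||g||_{X^*}=n$ and write $g=g_1g_2\cdots g_n$ with each $g_i\in X^*$. Iterating the defect inequality $n-1$ times yields
$$\Bigl|\varphi(g)-\sum_{i=1}^{n}\varphi(g_i)\Bigr|\le (n-1)D,$$
whence $|\varphi(g)|\le nM+(n-1)D\le n(M+D)=(M+D)\,||g||_{X^*}$, the case $g=1$ being trivial. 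If $M+D=0$ then $\varphi$ is a homomorphism vanishing on $X$, hence identically zero, and any $C>0$ works; otherwise set $C=1/(M+D)$, so that $||g||_{X^*}\ge C\,|\varphi(g)|\ge C\,\varphi(g)$ for every $g\in G$, as required.

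The only point demanding a little care is that the defect inequality must be iterated so as to produce the factor $n-1$ rather than $n$; this is a routine induction on $n$. No assumption on $X$ beyond finiteness is used, so the estimate applies verbatim to the reflection-length metric of~(\ref{refllengthdef}), where $X=S$ and $X^*=R$ — which is exactly the instance we shall invoke in the proof of Theorem~\ref{maintheorem}.
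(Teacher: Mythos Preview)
Your argument is correct and is the standard proof: conjugation invariance of homogeneous quasimorphisms gives a uniform bound on $|\varphi|$ over $X^*$, and then iterating the defect inequality over a minimal word yields the linear bound. Note, however, that the paper does not supply its own proof of this proposition; it is quoted directly from \cite{GalKed}, Lemma~3.7, so there is nothing in the present paper to compare against. Your proof is exactly the argument one finds in that reference (and in \cite{Cal}), and it is complete as written.
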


\begin{corollary}
\label{quasihomtobiinvariant}
Suppose that a finitely generated group admits an unbounded
homogeneous quasimorphism. Then any bi-invariant word metric
on this group is unbounded.
\end{corollary}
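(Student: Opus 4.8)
The plan is to read this off directly from Proposition \ref{quasihominequality}, together with the elementary observation at the start of the section that a non-zero homogeneous quasimorphism is unbounded. So there is essentially no obstacle here; the statement is a formal corollary, and the only point worth spelling out is the sign convention.

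Here is how I would carry it out. Let $G$ be a finitely generated group admitting an unbounded homogeneous quasimorphism $\varphi\colon G\to{\mathbb R}$, and let $||\cdot||_{X^*}$ be a bi-invariant word metric on $G$; by definition this is the metric associated to some finite generating set $X$. By Proposition \ref{quasihominequality} there is a constant $C>0$ with $||g||_{X^*}\ge C\cdot\varphi(g)$ for every $g\in G$. It remains to check that $\varphi$ takes arbitrarily large positive values. By homogeneity, applying $\varphi(g^n)=n\varphi(g)$ with $n=-1$ gives $\varphi(g^{-1})=-\varphi(g)$, so $\sup_{g\in G}\varphi(g)=\sup_{g\in G}|\varphi(g)|$, and the latter is $+\infty$ because $\varphi$ is unbounded. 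Hence the right-hand side $C\cdot\varphi(g)$ is unbounded above, and therefore so is $||\cdot||_{X^*}$.

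Since $X$ was an arbitrary finite generating set of $G$, and every bi-invariant word metric on $G$ is of the form $||\cdot||_{X^*}$ for such an $X$, this proves that \emph{every} bi-invariant word metric on $G$ is unbounded. (In the application this is combined with Remark \ref{reductiontominimal}, Theorem \ref{hyperbolicquotient}, Proposition \ref{hyperbolichavequasihom}, and the fact that a surjection of finitely generated groups does not increase bi-invariant word length, to conclude that reflection length on a minimal non-affine Coxeter group, hence on any non-affine Coxeter group, is unbounded.)
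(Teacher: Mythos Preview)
Your argument is correct and matches the paper's approach: the corollary is stated without proof, as an immediate consequence of Proposition~\ref{quasihominequality}, and your write-up simply spells out the one small detail (using homogeneity at $n=-1$ to ensure $\varphi$ is unbounded \emph{above}) needed to read it off. There is nothing to add.
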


We are now ready to deduce the main result of the paper.

\begin{proof}[Proof of Theorem \ref{maintheorem}]
Let $(W,S)$ be a non-affine Coxeter group. By Remark \ref{reductiontominimal}
we may assume that $W$ is minimal non-affine. Theorem \ref{hyperbolicquotient}
supplies a surjection of $W$ onto a non-elementary word-hyperbolic group $H$.
Proposition \ref{hyperbolichavequasihom} and Corollary \ref{quasihomtobiinvariant}
imply that any bi-invariant word metric on $H$ is unbounded. As explained
before Proposition \ref{quasihominequality}, it follows that any bi-invariant
word metric on $W$ is unbounded. Since reflection length is one of these metrics,
the proof is complete.
\end{proof}

{\bf Acknowledgments.}
I would like to thank Jan Dymara, {\'S}wiatos\l aw R. Gal,
and Tadeusz Januszkiewicz for patient explanations and
helpful conversations.

\end{document}